\documentclass[12pt]{amsart}
\usepackage{amsfonts, amssymb, amsmath, amsthm, euscript}
\textwidth=36pc
\oddsidemargin=30pt
\evensidemargin=30pt

\swapnumbers
\theoremstyle{plain}
\newtheorem{thm}{Theorem}[section]
\newtheorem{lem}[thm]{Lemma}
\newtheorem{prop}[thm]{Proposition}
\newtheorem{cor}[thm]{Corollary}

\theoremstyle{definition}
\newtheorem{setup}[thm]{Setup}
\newtheorem{exam}[thm]{Example}
\newtheorem{exams}[thm]{Examples}

\theoremstyle{remark}

\newtheorem{note}[thm]{}

\def\qij{q_{ij}}
\def\hij{h_{ij}}
\def\nxn{n{\times}n}
\def\dxd{d{\times}d}
\def\pxp{p{\times}p}
\def\N{\mathbb{N}}
\def\Nn{\mathbb{N}^n}
\def\Z{\mathbb{Z}}
\def\Zn{\mathbb{Z}^n}

\def\kq{k_q}

\def\ktimes{k^{\times}}

\def\s{{\bf s}}
\def\b{{\bf b}}


\begin{document}

\title[On $q$-commutative power and Laurent series]{On
  $q$-commutative power and Laurent \\ series rings at roots of unity}

\author{Edward S. Letzter}

\address{Department of Mathematics\\
        Temple University\\
        Philadelphia, PA 19122}
      
      \email{letzter@temple.edu }

\author{Linhong Wang}

\address{Department of Mathematics\\
University of Pittsburgh\\
Pittsburgh, PA 15260}

\email{lhwang@pitt.edu}

\author{Xingting Wang}

\address{Department of Mathematics\\
        Temple University\\
        Philadelphia, PA 19122}

\email{xingting@temple.edu}

\begin{abstract} We continue the first and second authors' study of
  $q$-commutative power series rings $R$ $=$ $\kq[[x_1,\ldots,x_n]]$
  and Laurent series rings $L$\-$=$
  $\kq[[x^{\pm 1}_1,\ldots,x^{\pm 1}_n]]$, specializing to the case in
  which the commutation parameters $\qij$ are all roots of unity. In
  this setting, $R$ is a PI algebra, and we can apply results of De
  Concini, Kac, and Procesi to show that $L$ is an Azumaya algebra
  whose degree can be inferred from the $\qij$. Our main result
  establishes an exact criterion (dependent on the $\qij$) for
  determining when the centers of $L$ and $R$ are commutative Laurent
  series and commutative power series rings, respectively. In the
  event this criterion is satisfied, it follows that $L$ is a unique
  factorization ring in the sense of Chatters and Jordan, and it
  further follows, by results of Dumas, Launois, Lenagan, and Rigal,
  that $R$ is a unique factorization ring. We thus produce new
  examples of complete, local, noetherian, noncommutative, unique
  factorization rings (that are PI domains).
\end{abstract}

\keywords{Skew power series, skew Laurent series, $q$-commutative.}

\subjclass[2010]{Primary: 16W60. Secondary: 16L30, 16S34}

\maketitle 


\section{Introduction}

Given a multiplicatively antisymmetric $\nxn$ matrix $q = (\qij)$,
over an algebraically closed field $k$, one can construct the
$q$-commutative power and Laurent series rings
$R = \kq[[x_1,\ldots,x_n]]$ and
$L = \kq[[x^{\pm 1}_1,\ldots,x^{\pm 1}_n]]$, with multiplication 
determined by the relations $x_ix_j = \qij x_j x_i$.  The ring $R$ is
a complete, local, noetherian, Auslander regular, zariskian (in the
sense of \cite{LiVOy}) domain, whose global dimension, Krull
dimension, and classical Krull dimension are all equal to $n$ (see
e.g.~\cite{LetWan}). In analogy with the commutative case, the ring
$L$ is the localization of $R$ at the normal elements
$x_1,\ldots,x_n$. While there is now a deep and detailed structure
theory available for quantizations of polynomial rings and related
algebras (see, e.g., \cite{GooYak} for one recent study), there is
still much to learn about $q$-commutative power series rings and
related algebras. (The reader is referred e.g.~to \cite{Koo} for a
survey on the combinatorics of $q$-commutative power series.)

Our focus in this note is on the case in which the $\qij$ are all
roots of unity. (Recall that the theories of quantum groups and
related algebras generally divide into the``root of unity'' and ``not
a root of unity'' cases.)  Our analysis builds on the previous study
by the first and second author \cite{LetWan} and depends on earlier
results of Kac, De Concini, and Procesi concerning ``twisted
polynomial algebras'' at roots of unity \cite{DeCKacPro}.

Assuming that the $\qij$ are all roots of unity, it follows from
standard reasoning that $R$ is a polynomial identity (PI) algebra. In
(\ref{Azumaya}), we use \cite{DeCKacPro} to show that $L$ is an
Azumaya algebra and that the PI degree of $R$ and $L$ can be inferred
from the $\qij$.

Continuing to assume that the $\qij$ are all roots of unity, our main
result (\ref{center-theorem}) provides an exact criterion (dependant
on the $\qij$) for determining when the center of $L$ is a Laurent
series ring in central indeterminates $z_1,\ldots,z_n$ and when the
center of $R$ is a power series ring in $z_1,\ldots,z_n$. Morever,
when this criterion is satisfied, it follows that $L$ is a unique
factorization ring in the sense of Chatters and Jordan
\cite{ChaJor}. Furthermore, when $L$ is a unique factorization ring,
it follows from the noncommutative analog of Nagata's Lemma given by
Dumas and Rigal \cite{DumRig}, and Launois, Lenagan, and Rigal
\cite{LauLenRig}, that $R$ must also be a unique factorization
ring. We thus obtain new examples of PI domains that are unique
factorization rings. We do not know of a precise description of the
centers of $R$ and $L$ when the criterion does not hold.

\section{Roots of unity, polynomial identities, and Azumaya algebras}

The reader is referred (e.g.) to \cite{GooWar}, \cite{LiVOy}, and
\cite{McCRob} for further background on noetherian rings, PI rings,
and filtered rings.

\begin{setup} \label{Setup} (Following \cite[\S 2]{LetWan}, which in
  turn followed \cite[\S 4]{BroGoo1}, \cite{DeCKacPro}, \cite[\S
  1]{GooLet}, and \cite{Hod}.)

  (i) Assume throughout that $k$ is an algebraically closed field,
  that $\ktimes$ denotes the multiplicative group of units in $k$, and
  that $n$ denotes a positive integer. Let $q = (\qij)$ be an
  $\nxn$ matrix with entries in $\ktimes$ such that $q_{ii} = 1$ and
  $\qij = q^{-1}_{ji}$ for all $1 \leq i,j \leq n$. The set of
  non-negative integers will be denoted $\N$.

  (ii) The $q$-commutative power series ring
  $R = \kq[[x_1,\ldots,x_n]]$ is the associative unital $k$-algebra of
  formal (skew) power series, in indeterminates $x_1,\ldots,x_n$,
  subject only to the commutation relations $x_i x_j = \qij x_j
  x_i$. The elements of $R$, then, are formal power series
 \[ \sum _{s \in \Nn} c_s x^s ,\]
 for $c_s \in k$, for $s = (s_1,\ldots,s_n) \in \Nn$, and for $x^s =
 x_1^{s_1}\cdots x_n^{s_n}$. 

 (iii) Contained within $R$ is the $k$-subalgebra
 $R' = \kq[x_1,\ldots,x_n]$ of polynomials in the $x_1,\ldots,x_n$,
 subject to the same commutation relations as in $R$.
\end{setup}

\begin{note} \label{J-adic} (Following \cite[\S 2]{LetWan}; the reader
  is referred to \cite{LiVOy} for further information on filtrations
  and completions.)

  (i) $R$ is the completion of $R'$ at the ideal generated by
  $x_1,\ldots,x_n$.  

  (ii) Letting $J$ denote the augmentation ideal
  $\langle x_1,\ldots , x_n \rangle$ of $R$, it follows that the
  $J$-adic filtration
\[J^0 \supset J^1 \supset J^2 \cdots \]
is separated (i.e., the intersection of the $J^i$ is the zero ideal)
and complete (i.e., Cauchy sequences converge in the $J$-adic
topology). 

(iii) $R$ is a complete local ring with unique primitive ideal $J$.

(iv) The associated graded ring of $R$ corresponding to the $J$-adic
filtration is isomorphic to the noetherian domain $R'$, and it
follows that $R$ is a noetherian domain.

(v) $R'$ is dense in $R$ under the $J$-adic topology.
\end{note}

\begin{note} It is a standard fact that $R'$ satisfies a polynomial
  identity if and only the $\qij$ are all roots of
  unity. Consequently, if the $\qij$ are \emph{not} all roots of
  unity, then the over-ring $R$ of $R'$ cannot satisfy a PI. On the
  other hand, if the $\qij$ \emph{are} all roots of unity, then there
  is some positive integer $\gamma$ such that
  $x_1^\gamma, \ldots, x_n^\gamma$ are all central in $R$, making $R$
  a finite module over the central subalgebra
  $k[[x^\gamma_1,\ldots, x_n^\gamma]]$.  We conclude that $R$
  satisfies a polynomial identity (and is a finite module over its
  center) if and only if the $\qij$ are all roots of unity.
\end{note}

We turn next to $q$-commutative Laurent series rings. 

\begin{note} Let $X$ denote the multiplicatively closed subset of $R$
  generated by $1$ and the indeterminates $x_1,\ldots,x_n$.  Since 
  each $x_i$ is normal in $R$, we see that $X$ is an Ore set in 
  $R$. Henceforth, we will let $L$ denote the \emph{$q$-commutative 
    Laurent series ring} $\kq[[x^{\pm 1}_1,\ldots,x^{\pm 1}_n]]$,
  obtained via the Ore localization of $R$ at $X$. (See, e.g.,
  \cite{GooWar} or \cite{McCRob} for details on Ore localizations.) 
  Note that $L$ is a noetherian domain and can be obtained as the localization of $R$ at 
  the product $x_1\cdots x_n$, which is a single normal element of $R$. 

  Each element of $L$ will have the form 
 \[ \sum _{s \in \Zn} c_s x^s ,\]
 for $c_s \in k$, for $s = (s_1,\ldots,s_n) \in \Zn$, and for $x^s =
 x_1^{s_1}\cdots x_n^{s_n}$, but with 
\[c_s = 0 \quad \text{for} \quad \min\{s_1,\ldots,s_n\} \ll 0.\]

We will let $L' = \kq[x^{\pm 1}_1,\ldots,x^{\pm 1}_n]$ denote the well-known
subalgebra of $L$ comprised of $q$-commutative Laurent polynomials in
the $x_1,\ldots, x_n$.
\end{note}

\begin{note} \label{DKP} Assuming the $\qij$ to all be roots of unity,
  the representation theory of $L'$ is described by De Concini, Kac,
  and Procesi as follows \cite{DeCKacPro}: Let $\ell$ be the order of,
  and let $\epsilon$ be a generator of, the multiplicative cyclic
  group $\langle \qij \rangle$. Choose $\hij$, for
  $1 \leq i,j \leq n$, such that $\qij = \epsilon ^{\hij}$. Then the
  matrix $(\hij)$ defines a homomorphism
\[H:\Zn \longrightarrow (\Z/\ell \Z)^n .\]
Letting $h$ denote the cardinality of the image of $H$, it is
proved in \cite{DeCKacPro} that $R'$ has PI degree $\sqrt{h}$ and that
$L'$ is an Azumaya algebra of PI degree $\sqrt{h}$.
\end{note}

\begin{lem} \label{converges} Let $p$ be a positive integer, and
  let $\s= \s(X_1,X_2,\ldots)$ be a multilinear polynomial identity for
  $\pxp$-matrices. Then $\s$ vanishes on $R$ if and only if $\s$
  vanishes on $R'$. 
\end{lem}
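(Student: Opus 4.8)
The forward implication is immediate: since $R'$ is a subalgebra of $R$, any identity that vanishes on all tuples from $R$ vanishes in particular on all tuples from $R'$. The content of the lemma is the reverse implication, and the plan is to exploit the density of $R'$ in $R$ together with the separatedness of the $J$-adic filtration recorded in (\ref{J-adic}).

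Write $\s = \s(X_1,\ldots,X_m) = \sum_{\sigma \in S_m} c_\sigma X_{\sigma(1)} \cdots X_{\sigma(m)}$ with $c_\sigma \in k$; the multilinearity hypothesis is exactly what guarantees that each monomial is a product in which every argument occurs precisely once. Assume $\s$ vanishes on $R'$ and fix arbitrary $r_1,\ldots,r_m \in R$. I would show that $\s(r_1,\ldots,r_m) \in J^N$ for every $N \geq 0$; since $\bigcap_N J^N = 0$ by the separatedness in (\ref{J-adic}), this forces $\s(r_1,\ldots,r_m) = 0$ and completes the proof.

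To carry this out, fix $N$ and use part (v) of (\ref{J-adic}) to choose, for each $i$, an element $r'_i \in R'$ with $r_i - r'_i \in J^N$. The key estimate is the telescoping identity: for any $a_1,\ldots,a_m,b_1,\ldots,b_m \in R$ one has
\[
 a_1 \cdots a_m - b_1 \cdots b_m \;=\; \sum_{j=1}^m b_1 \cdots b_{j-1}\,(a_j - b_j)\,a_{j+1} \cdots a_m .
\]
Applying this to each monomial, with $a_i = r_{\sigma(i)}$ and $b_i = r'_{\sigma(i)}$, every summand on the right carries a factor $r_{\sigma(j)} - r'_{\sigma(j)} \in J^N$, so it lies in $J^N$ because $J^N$ is a two-sided ideal. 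Summing over $\sigma$ against the scalars $c_\sigma$ yields $\s(r_1,\ldots,r_m) - \s(r'_1,\ldots,r'_m) \in J^N$. Since $\s$ vanishes on $R'$, the term $\s(r'_1,\ldots,r'_m)$ is zero, and therefore $\s(r_1,\ldots,r_m) \in J^N$, as required.

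The argument is short, and there is no serious obstacle once the right framework is chosen; the one point deserving attention is that multilinearity makes the telescoping estimate lossless, passing from $J^N$-approximations of the arguments to a $J^N$ bound on the value, so that the conclusion lands directly in $\bigcap_N J^N$. In effect, each multilinear $\s$ defines a map $R^m \to R$ that is continuous for the $J$-adic topology and coincides with the zero map on the dense subset $(R')^m$; separatedness then upgrades ``zero on a dense subset'' to ``zero everywhere.''
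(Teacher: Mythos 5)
Your proof is correct and follows essentially the same route as the paper: approximate each argument by elements of $R'$ modulo $J^N$ via density, use multilinearity (your telescoping identity playing the role of the paper's direct multilinear expansion of $\s(a_{1,N}+b_{1,N},\ldots)$) to conclude $\s(r_1,\ldots,r_m) \in J^N$, and then let $N$ vary. The paper phrases the final step as a $J$-adic limit while you invoke $\bigcap_N J^N = 0$ directly, but these are the same use of separatedness.
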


\begin{proof} Assume that $\s$ vanishes on $R'$; to verify the lemma
  it suffices to show that $\s$ also vanishes on $R$. So choose
  $f_1,f_2,\ldots \in R$, and let $N$ be a positive integer. Since
  $R'$ is dense in $R$ under the $J$-adic topology, for each $i$ we
  can write
\[ f_i = a_{i,N} + b_{i,N} ,\]
for some $a_{i,N} \in R'$ and $b_{i,N} \in J^N$. 

Next, noting that $\s$ is multilinear on $R$, that $\s$ is a polynomial
identity for $R'$, and that $J$ is an ideal of $R$, we have:
\[\s(f_1,f_2,\ldots) = s(a_{1,N} + b_{1,N}, a_{2,N} + b_{2,N}, \ldots) \in
\s(a_{1,N}, a_{2,N}, \ldots) + J^N = 0 + J^N = J^N.\]
Therefore,
\[0 = \lim_{N \rightarrow \infty} \s(a_{1,N},a_{2,N},\ldots) =
\s(f_1,f_2,\ldots) .\]
Hence $\s$ vanishes on $R$, and the lemma follows.
\end{proof}

\begin{note} \label{Assumptions} \emph{For the remainder of this note,
    assume that the $\qij$ are all roots of unity and that $h$ is as
    defined in {\rm (\ref{DKP})}; set $d = \sqrt{h}$.}
\end{note}

\begin{prop} \label{Azumaya} {\rm (i)} $R$ has PI degree $d$. {\rm
    (ii)} $L$ is an Azumaya algebra of PI degree 
  $d$.
\end{prop}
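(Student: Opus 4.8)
The plan is to deduce both statements from the corresponding facts about the polynomial subalgebras $R'$ and $L'$ recorded in (\ref{DKP}), transferring the PI degree through the density/completion results and transferring the Azumaya property through a base-change argument.

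For part (i) I would use the standard fact that, for a prime PI algebra over an algebraically closed field, the PI degree is the least positive integer $p$ for which the algebra satisfies every multilinear polynomial identity of $\pxp$-matrices (equivalently, its multilinear identities coincide with those of $M_d(k)$, where $d$ is the PI degree). Since $R$ is a domain by (\ref{J-adic}), it is prime, as is $R'$. By (\ref{DKP}), $R'$ has PI degree $d=\sqrt{h}$, so $d$ is the least $p$ for which $R'$ satisfies all multilinear identities of $\pxp$-matrices. Lemma (\ref{converges}) shows that, for each $p$, such an identity vanishes on $R$ if and only if it vanishes on $R'$; hence the two least values agree and $R$ has PI degree $d$ as well.

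For part (ii) I would first pin down the center of $L$ abstractly, without invoking the precise description reserved for (\ref{center-theorem}). Writing $\qij=\epsilon^{\hij}$ as in (\ref{DKP}), a monomial $x^s$ is central in $L$ precisely when $\sum_i s_i\hij\equiv 0\pmod{\ell}$ for all $j$, i.e.\ when $s$ lies in the sublattice $\Lambda=\ker H\subseteq\Zn$, which has index $h=d^2$. Because commuting with each $x_j$ is a closed condition on series, a series $\sum_s c_s x^s$ is central if and only if $c_s=0$ for every $s\notin\Lambda$; thus $Z(L)$ consists exactly of the central series supported on $\Lambda$, and it contains $Z(L')$. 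Fixing coset representatives $b$ for $\Zn/\Lambda$ and grouping an arbitrary element of $L$ according to these cosets exhibits $L=\bigoplus_b Z(L)\,x^b$ as a free $Z(L)$-module of rank $d^2$, and identifies $L$ with the base change $L'\otimes_{Z(L')}Z(L)$. With this identification, part (ii) follows from the stability of the Azumaya property under arbitrary base change: since $L'$ is Azumaya of PI degree $d$ over $Z(L')$ by (\ref{DKP}), the algebra $L\cong L'\otimes_{Z(L')}Z(L)$ is Azumaya of PI degree $d$ over $Z(L)$. (The PI degree of $L$ also follows from part (i), as $L$ is a localization of $R$.)

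The routine parts are the PI-degree transfer and the monomial bookkeeping identifying $Z(L)$ and the free-module structure. The main obstacle is the final identification: verifying carefully that the coset decomposition of a general Laurent \emph{series} respects the bounded-below support condition defining $L$, so that $L$ is genuinely the base change $L'\otimes_{Z(L')}Z(L)$ and not merely a completion sharing the same center. Once this convergence and support bookkeeping is settled, the transfer of the Azumaya property is immediate and, notably, requires no explicit description of $Z(L)$.
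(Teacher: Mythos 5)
Your part (i) coincides with the paper's proof: the paper also deduces it by combining (\ref{DKP}) with Lemma \ref{converges}, exactly as you do. For part (ii), however, you take a genuinely different route. The paper's argument is very short and avoids the center entirely: it notes that $L$ has PI degree $d$ because $R$ and $L$ are orders in the same Goldie quotient division ring, and then applies the Artin--Procesi theorem with the multilinear central polynomial $g_d$ of \cite[13.5.11]{McCRob}, observing that $1 \in g_d(L')L' \subseteq g_d(L)L$, whence $g_d(L)L = L$ and $L$ is Azumaya of degree $d$. You instead identify $Z(L)$ as the series supported on the lattice $S = \ker H$ of index $h = d^2$ --- this is precisely the paper's (\ref{center}.iii), which appears in the \emph{next} section, so your argument involves a forward reference, though not a circular one, since that note is independent of the proposition --- then decompose $L = \bigoplus_b Z(L)x^b$ over coset representatives $b$ of $\Zn/S$ and invoke stability of the Azumaya property under the base change $Z(L') \to Z(L)$. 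The convergence issue you flag does resolve correctly: grouping a series by cosets of $S$ shifts each component's support by a fixed vector $b$, which preserves the bounded-below condition defining $L$, so each component lies in $Z(L)x^b$; and the multiplication map $L' \otimes_{Z(L')} Z(L) \to L$ is an algebra map (the image of $Z(L)$ is central, hence commutes with $L'$) carrying the free $Z(L)$-basis $\{x^b \otimes 1\}$ to $\{x^b\}$, hence an isomorphism. The trade-off: the paper's route is a three-line deduction needing no knowledge of $Z(L)$, while yours costs more bookkeeping but yields strictly more structure --- an explicit rank-$d^2$ free decomposition of $L$ over its center and the base-change description $L \cong L' \otimes_{Z(L')} Z(L)$ --- and reads the degree off the rank without Artin--Procesi. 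One phrase worth tightening: rather than saying commutation with each $x_j$ is ``a closed condition on series,'' note that the bounded-below support condition makes every coefficient of a product in $L$ a finite sum, so centrality can be verified coefficientwise against monomials.
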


\begin{proof} (i) This follows by combining (\ref{DKP}) and
  (\ref{converges}). 

(ii)  We know from (i) that the PI degree of $R$ is $d$, and since $R$ and
  $L$ are both right orders in the same Goldie quotient domain, it
  follows that $L$ also has PI degree $d$.

  Following (e.g.) \cite[\S 13.5]{McCRob}, let $g_d$ denote the
  multilinear central polynomial for $\dxd$ matrices constructed in
  \cite[13.5.11]{McCRob}. Then if $A$ is an arbitrary prime ring, the
   Artin-Procesi theorem ensures that $A$ is an Azumaya algebra of
  degree $d$ if and only if $g_d(A)A = A$; see \cite[13.7.14]{McCRob}.

We saw in (\ref{DKP}) that $L'$ is an Azumaya algebra of
degree $d$, and so $g_d(L')L' = L'$. In particular, $1 \in g_d(L')L'
\subseteq g_d(L)L$. Hence $g_d(L)L = L$ and $L$ is an Azumaya algebra
of degree $d$.
\end{proof}

\section{Centers}

Retain the assumptions and notation of the preceding section,
particularly the assumption that all of the $\qij$ are roots of unity.

\begin{note} \label{center} (See, e.g., \cite[\S 1]{GooLet}.)

  (i) Consider the alternating bicharacter
  $\sigma\colon\Zn{\times}\Zn \rightarrow \ktimes$ defined by
\[ \sigma(s,t)  =  \prod_{i,j = 1}^n \qij^{s_it_j} ,\]
for $s = (s_1,\ldots,s_n)$ and $t = (t_1,\ldots,t_n)$ in $\Zn$.  Then
\[ x^s x^t = \sigma(s,t)x^t x^s .\]

(ii) Set
\[ S  =  \{ s \in \Zn  \mid  \text{$\sigma(s,t) = 1$ for
  all $t \in \Zn$} \}.\]

Note that $S$ is a (free abelian) subgroup of $\Z^n$. 

(iii) A monomial $x^s$, for $s \in \Zn$, commutes with all monomials
$x^t$, for all $t \in \Zn$, if and only if $s \in S$. Letting $Z$
denote the center of $L$, it follows that
\[ Z = \left.\left\{ \; \sum_{s \in S, \; \nu_s 
    \in k} \nu_sx^s \in L \; \right| \; \nu_s = 0 \;  \text{for} \; \min\{s_1,\ldots,s_n\}
\ll  0 \; \right\}.\]
\end{note}

\begin{note} \label{positive} (i) Say that an $n$-tuple $s \in \Zn$ is \emph{positive
    (with respect to the standard basis)} if all of the entries of $s$
  are non-negative and $s$ contains at least one positive entry.

(ii) Note that $s \in \Zn$ is positive if and only if 
\[ \sum_{i=0}^\infty c_i (x^s)^i ,\]
is a well defined element of $L$, for all choices of $c_i \in k$,
where $(x^s)^i$ denotes the product of $x^s$ with itself $i$ times.

(iii) Let $\ell$ be the order of the finite multiplicative 
  group generated by the $\qij$. Then the $n$-tuples 
  \[ (\ell,0,\ldots,0), (0,\ell,0,\dots,0), \ldots , (0, \dots , 0,
  \ell) \]
  form a $\Z$-linearly independent subset of $S$, and so the abelian
  group $S$ has rank $n$.
\end{note}

\begin{note} \label{positive-diagonal} Let $\b_1,\ldots,\b_n$ be a $\Z$-basis for $S$, and let
  $B$ denote the $\nxn$ matrix whose $i$th row is $\b_i$.

  (i) We will say that the basis $\b_1,\ldots,\b_n$ is \emph{positive
    diagonal} if $B$ is a diagonal matrix whose diagonal entries are
  all positive. 

  (ii) If a given  $s \in S$ can be written as
  \[s = m_1 \b_1 + \cdots + m_n \b_n,\]
  for non-negative $m_1,\ldots,m_n \in \Z$, and with at least one
  $m_i > 0$, then we will say that $s$ is \emph{positive with respect to
    $B$}.
\end{note}

\begin{thm} \label{center-theorem}  Let $\b_1,\ldots,\b_n$ be a 
  $\Z$-basis for $S$, and set 
\[z_1 = x^{\b_1}, \ldots , z_n = x^{\b_n} .\]
The following are equivalent: {\rm (i)} $\b_1,\ldots,\b_n$ is a
positive diagonal basis for $S$ (after reordering if necessary). {\rm (ii)}
$Z = Z(L)$ is a Laurent series ring over $k$ in $z_1,\ldots, z_n$. {\rm (iii)}
$Z(R)$ is a power series ring over $k$ in $z_1,\ldots , z_n$.
\end{thm}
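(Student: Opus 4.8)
The plan is to convert the two ring-theoretic conditions (ii) and (iii) into a single combinatorial condition on the change-of-basis matrix, and then to recognize that condition as (i). First I would fix the integer matrix $B$ whose $i$th row is $\b_i$, so that $s = \sum_i m_i \b_i$ corresponds to $s = mB$ for a row vector $m \in \Zn$, and $m = sB^{-1}$ with $B^{-1}$ rational (since $\det B = \pm [\Zn : S]$, and $S$ has rank $n$ by (\ref{positive})). Because each $\b_i \in S$, the monomials $z_i = x^{\b_i}$ pairwise commute and are algebraically independent over $k$, and $z^m = z_1^{m_1}\cdots z_n^{m_n} = \lambda_m\, x^{mB}$ for nonzero scalars $\lambda_m \in \ktimes$; these scalars affect neither supports nor convergence, so I may track everything at the level of the exponent lattices. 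I would also record that $Z(R) = Z \cap R$ (an element of $R$ commuting with every $x_i$ automatically commutes with each $x_i^{-1}$).

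Next I would make (ii) and (iii) precise as equalities of support sets. By (\ref{center}), an element of $Z$ is exactly a series $\sum_{s \in S}\nu_s x^s$ whose support lies in some shifted orthant $A_M = S \cap \{\, s : s_j \ge M \text{ for all } j \,\}$; asserting that $Z$ is the Laurent series ring in the $z_i$ means these are precisely the series whose support, read in the $m$-coordinates, lies in some orthant $\{\, m : m_i \ge M' \text{ for all } i \,\}$. Comparing the two families of admissible supports, (ii) holds if and only if each family refines the other: (a) $m_i \ge M'$ for all $i$ forces $(mB)_j \ge M$ for all $j$, and (b) $s_j \ge M$ for all $j$ forces $(sB^{-1})_i \ge M'$ for all $i$. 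A one-line ray argument (sending a single coordinate to $+\infty$) shows that (a) is equivalent to $B \ge 0$ entrywise and (b) to $B^{-1} \ge 0$ entrywise. Applying the same analysis on the non-negative orthant, using $Z(R) = Z \cap R$ and that $R$ consists of all series supported on $\Nn$, reduces (iii) to the single identity $\{\, mB : m \in \Nn \,\} = S \cap \Nn$, which again unwinds to $B \ge 0$ and $B^{-1} \ge 0$; here I would additionally note that the $J$-adic topologies match, since the row sums of $B$ are positive and hence total degree tends to infinity along any exhausting sequence of exponents.

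It then remains to prove the matrix lemma: an invertible real matrix $B$ with $B \ge 0$ and $B^{-1} \ge 0$ is monomial, i.e.\ has exactly one positive entry in each row and each column. I would argue from $BB^{-1} = I$: vanishing of each off-diagonal entry, being a sum of non-negative products, forces $B_{ik}(B^{-1})_{kj} = 0$ for $i \ne j$ and all $k$, while the diagonal entries equal $1$; tracking the supports of the rows of $B$ against the columns of $B^{-1}$ shows that the row supports partition $\{1,\dots,n\}$ into $n$ nonempty blocks, hence into singletons. An invertible monomial integer matrix is a permutation matrix times a positive-integer diagonal matrix, so after reordering the $\b_i$ it becomes positive diagonal, giving (i); conversely (i) $\Rightarrow$ $B, B^{-1} \ge 0$ is immediate, as then $B$ is positive diagonal. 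Combining the three steps yields (i) $\Leftrightarrow$ [$B \ge 0$ and $B^{-1} \ge 0$] $\Leftrightarrow$ (ii), and likewise $\Leftrightarrow$ (iii).

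The main obstacle I anticipate is the middle step: turning the informal phrase ``$Z$ is a Laurent series ring in the $z_i$'' into honest mutually-cofinal support conditions, and in particular recognizing that non-negativity must be forced for \emph{both} $B$ and $B^{-1}$. The convergence half (every formal $z$-series lies in $L$) pins down $B \ge 0$, but it is the completeness/surjectivity half (every central series of $L$ is a convergent $z$-series) that pins down $B^{-1} \ge 0$, and this half is easy to overlook. By contrast the matrix lemma, once the translation is in place, is standard.
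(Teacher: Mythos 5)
Your proposal is correct, and it shares the paper's overall skeleton---translate (ii) and (iii) into combinatorics of the exponent lattice, extract non-negativity conditions on the change-of-basis data, and finish with a monomial-matrix lemma---but the middle step is implemented differently, and in one respect yours is the more careful version. The paper detects positivity of an exponent $s \in S$ via convergence of the series $\sum_i c_i (x^s)^i$ as in (\ref{positive}.ii), then uses $\Z$-linear Gaussian elimination to produce an integer matrix $A$ with $AB$ positive diagonal, shows $A \geq 0$, and concludes that $B$ is monomial from ``$A, B \geq 0$ and $AB$ diagonal''; you instead read (ii)/(iii) as mutual cofinality of support orthants, get $B \geq 0$ and $B^{-1} \geq 0$ by coordinate rays, and invoke the standard fact that an invertible non-negative matrix with non-negative inverse is monomial. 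The two matrix lemmas are interchangeable (the paper's $A$ is $DB^{-1}$ with $D$ positive diagonal, so $A \geq 0$ iff $B^{-1} \geq 0$); what your formulation buys is a uniform treatment of (ii) and (iii) and transparent quantifiers, while the paper's piggybacks on machinery already in place. More substantively, your closing remark pinpoints exactly where the paper is tersest: its converse argument is introduced under the hypothesis that ``arbitrary Laurent series in the $z_i$ form a well defined subalgebra of $Z$,'' but that half alone yields only $B \geq 0$. For instance, with $n = 2$ and all $\qij = 1$, so that $S = \Zn$ and $Z = L$, the basis $(1,0), (1,1)$ makes every Laurent series in $z_1 = x_1$, $z_2 = x_1x_2$ converge, yet $\sum_{i \geq 0} x_2^i \in Z$ is not a Laurent series in $z_1, z_2$; the paper's step ``each row of $D$ is positive with respect to $B$'' tacitly uses the other containment $Z \subseteq k[[z_1^{\pm 1},\ldots,z_n^{\pm 1}]]$ (resp.\ $Z(R) \subseteq k[[z_1,\ldots,z_n]]$), i.e.\ your condition (b), and your proof makes this dependence explicit. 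Two small points to nail down when writing it up: the ray argument for (b) must run inside $S$, which works because $(\,0,\ldots,\ell,\ldots,0\,) \in S$ by (\ref{positive}.iii); and distinct formal $z$-series give distinct elements of $L$ because $m \mapsto mB$ is injective, which is what lets you argue purely at the level of supports.
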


\begin{proof} First note, for $m_1,\ldots,m_n \in \Z$ and 
\[ s = m_1\b_1 + \cdots + m_n\b_n  = (s_1,\ldots,s_n) \in S,\]
that 
\[ x^s = x^{m_1\b_1 + \cdots + m_n\b_n} = \gamma_s z_1^{m_1}\cdots 
z_n^{m_n}, \eqno{(\ast)}\]
for some nonzero $\gamma_s \in k$. 

We now prove (i) $\Rightarrow$ (ii) and (i) $\Rightarrow$ (iii). To
start, assume that
\[\b_1 = (\lambda_1,0,\ldots,0), \ldots, \b_n = (0,\dots , 0, \lambda_n), \]
for positive $\lambda_1,\ldots,\lambda_n \in \Z$. Consequently,
$s= (m_1\lambda_1,\ldots,m_n\lambda_n)$ will be positive with respect
to the standard basis if and only if $s$ is positive with respect to
$B$. Moreover,
\[ \min\{s_1,\ldots, s_n\} \; \ll \; 0 \; \Longleftrightarrow \;
\min\{m_1,\ldots , m_n\} \; \ll \; 0 .\]
Setting $m = (m_1,\ldots,m_n)$, it follows from (\ref{center}.iii) and
($\ast$) that
\[ \begin{aligned} Z &= \left.\left\{ \; \sum_{s \in S, \; \nu_s \in 
        k} \nu_sx^s \; \right| \; \nu_s = 0 \; \text{for} \; \min\{s_1,\ldots,s_n\}
    \ll 0 \; \right\} \\
  &= \left.\left\{ \; \sum_{m \in \Zn, \;  \mu_{m} \in k}
      \mu_{m} z_1^{m_1}\cdots z_n^{m_n} \; \right| \; \mu_{m} = 0 \;
    \text{for} \; \min\{m_1,\ldots,m_n\} \ll  0 \; \right\} \\
  &= k[[z_1^{\pm 1}, \ldots , z_n^{\pm 1}]]. \end{aligned}\]

Next observe that the center of $R$ must equal $Z \cap R$, and that
\[ \begin{aligned} Z \cap R &= \left.\left\{ \; \sum_{s \in S, \; \nu_s \in 
        k} \nu_sx^s \; \right| \; \nu_s = 0 \; \text{if $s_i < 0$ for
      some $i = 1,\ldots, n$} \; \right\} \\
  &= \left.\left\{ \; \sum_{m \in \Zn, \;  \mu_{m} \in k}
      \mu_{m} z_1^{m_1}\cdots z_n^{m_n} \; \right| \; \mu_{m} = 0 \;
\text{if $m_i < 0$ for
      some $i = 1,\ldots, n$} \; \right\} \\
  &= k[[z_1, \ldots , z_n]]. \end{aligned}\]
Therefore, (i) $\Rightarrow$ (ii), and (i) $\Rightarrow$ (iii).

To prove (ii) $\Rightarrow$ (i) and (iii) $\Rightarrow$ (i), first
assume that arbitrary Laurent series over $k$ in $z_1,\ldots,z_n$ form
a well defined subalgebra of $Z$.  Choose
$s = m_1\b_1 + \cdots + m_n \b_n \in S$. Then by (\ref{positive}) and
($\ast$),
\[ \sum_{i=0}^\infty c_i (x^s)^i = \sum_{i=0}^\infty c_i \left(\gamma_s z_1^{m_1}\cdots 
z_n^{m_n}\right)^i\]
will be a well defined element of $L$, for all choices of $c_i\in k$,
if and only if $s$ is positive with respect to $B$, if and only if $s$
is positive with respect to the standard basis. In particular, each
$\b_i$ is positive with respect to the standard basis, and so the
entries of $B$ are all non-negative.

Since $B$ has rank $n$, we can use $\Z$-linear Gaussian elimination to
produce an $\nxn$ matrix $A$, with integer entries, such that 
\[D := AB = \begin{bmatrix} \lambda_1 & & \\
  &\ddots& \\
  & & \lambda_n
\end{bmatrix}\]
for positive integers $\lambda_1,\ldots,\lambda_n$. 

Since each row of $D$ is positive with respect to the standard basis,
it follows from above that each row of $D$ is positive with respect to
$B$.  Therefore, all of the entries of $A$ must be
non-negative. However, if the entries of $A$ and $B$ are all
non-negative, and if $AB$ is a diagonal matrix, then it must be the
case that $B$ has exactly one nonzero entry in each row. We now see
that $B$ is positive diagonal (after changing the order of the rows,
if necessary).

The proof of (ii) $\Rightarrow$ (i) follows. 

To prove (iii) $\Rightarrow$ (i), assume that $Z(R) =
k[[z_1,\ldots,z_n]]$. Note that the Laurent series ring
$k[[z_1^{\pm 1},\ldots, z_n^{\pm 1}]]$ will then be a well defined
$k$-subalgebra of $Z \subseteq L$, and so the above argument can be
applied.

The theorem follows.
\end{proof}

\begin{exams} \label{example} Suppose that the characteristic of $k$ is not equal to
  $2$.

(i) Consider $R$ and $L$, for $n = 3$, with
  $x_ix_j = - x_jx_i$ for $i \ne j$. A monomial
  $x_1^{s_1}x_2^{s_2}x_3^{s_3}$ will be in the center of $L$ if and
  only if either $|s_1| = |s_2| = |s_3|$ or $s_1$, $s_2$, $s_3$ are
  all even. It follows that no basis for $S$ can be positive diagonal,
  and so the centers of $R$ and $L$ cannot be power or Laurent series
  rings, respectively. 

  (ii) Again consider $R$ and $L$ for $n = 3$, with
  $x_1x_2 = - x_2x_1$, $x_1x_3 = - x_3x_1$, and $x_2x_3 = x_3x_2$.
  Then a monomial $x^{s_1}x^{s_2}x^{s_3}$ is in the center of $L$ if
  and only if each of $s_1$, $s_2$, $s_3$ is even. Therefore,
  $(2,0,0)$, $(0,2,0)$, and $(0,0,2)$ form a basis for $S$. The center
  of $L$ is $k[[x_1^{\pm 2},x_2^{\pm 2},x_3^{\pm 2}]]$, and the center
  of $R$ is $k[[x_1^2,x_2^2,x_3^2]]$

  (iii) Let $n = 2$ and let $q$ be a primitive $\ell$th root of unity.
  The center of $L$ is $k[[x_1^{\pm \ell}, x_2^{\pm \ell}]]$, and the
  center of $R$ is $k[[x_1^\ell, x_2^\ell]]$. It was already proved in
  \cite{VDBVGa} that the center of an Ore-extension-based skew power
  series ring in two variables is a commutative power series ring in
  two variables.
\end{exams}
  
\section{Unique factorization}

Chatters and Jordan defined a \emph{unique factorization ring (UFR)}
to be a noetherian prime ring in which every height-one prime ideal is
principal \cite{ChaJor}. 

Recall our assumption that the $\qij$ are all roots of unity, and
recall our definition of \emph{positive diagonal} in
(\ref{positive-diagonal}).

\begin{prop} \label{L-ufr} If $S$ has a positive diagonal basis then $L$ is a UFR.
\end{prop}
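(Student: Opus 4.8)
The plan is to exploit the Azumaya structure of $L$ established in (\ref{Azumaya}) in order to transfer the unique factorization property from the center $Z = Z(L)$ up to $L$. Recall that $L$ is a noetherian domain, hence a noetherian prime ring, so by the Chatters--Jordan definition it suffices to prove that every height-one prime ideal of $L$ is principal. The key structural input is the standard fact that, for an Azumaya algebra $A$ over its center $Z$, the assignments $\mathfrak{p} \mapsto \mathfrak{p}A$ and $P \mapsto P \cap Z$ give mutually inverse, inclusion-preserving bijections between $\spec Z$ and $\spec A$ (see, e.g., \cite[\S 13.7]{McCRob}, together with the general theory of Azumaya algebras). In particular this correspondence is a poset isomorphism and so preserves heights; thus the height-one primes of $L$ are exactly the ideals $\mathfrak{p}L$ for $\mathfrak{p}$ a height-one prime of $Z$.

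Next I would identify $Z$ concretely. Since $S$ is assumed to have a positive diagonal basis, Theorem (\ref{center-theorem}) gives $Z = k[[z_1^{\pm 1}, \ldots, z_n^{\pm 1}]]$, the commutative Laurent series ring in the central indeterminates $z_i = x^{\b_i}$. This ring is the localization of the formal power series ring $k[[z_1, \ldots, z_n]]$ at the single element $z_1 \cdots z_n$. As $k[[z_1, \ldots, z_n]]$ is a regular local ring, it is a noetherian UFD, and a localization of a UFD is again a UFD; hence $Z$ is a commutative noetherian UFD. By Krull's characterization of UFDs among noetherian domains, every height-one prime of $Z$ is therefore principal.

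It then remains to combine these observations. Let $P$ be a height-one prime of $L$. By the correspondence, $\mathfrak{p} := P \cap Z$ is a height-one prime of $Z$ with $P = \mathfrak{p}L$, and by the previous paragraph $\mathfrak{p} = zZ$ for some $z \in Z$. Since $z$ is central in $L$ it is normal, so $P = \mathfrak{p}L = zL$ is principal, generated by the normal element $z$. Hence every height-one prime of $L$ is principal, and $L$ is a UFR. The step requiring the most care is the first one: confirming that the center-to-algebra prime correspondence for Azumaya algebras preserves heights and carries a principal prime $zZ$ of $Z$ to the principal (normal) ideal $zL$ of $L$, so that principality genuinely descends along $\mathfrak{p} \mapsto \mathfrak{p}L$.
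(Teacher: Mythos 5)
Your proposal is correct and takes essentially the same approach as the paper: both arguments use the Azumaya correspondence between ideals (and primes) of $L$ and of its center $Z$ (the paper cites \cite[3.11, 3.14]{LetWan} as an alternative source for this correspondence), invoke Theorem \ref{center-theorem} to identify $Z$ as a commutative Laurent series ring, and then descend principality of a height-one prime from $Z$ to $L$ via $P = L(P \cap Z)$. Your extra justifications---that $Z$ is a UFD because it is a localization of the regular local ring $k[[z_1,\ldots,z_n]]$, and that the prime correspondence preserves heights---are accurate details the paper leaves implicit.
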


\begin{proof} It follows from \cite[3.11]{LetWan} or from $L$ being
  an Azumaya algebra with center $Z$, as seen in (\ref{Azumaya}.ii),
  that
\[ I = L(I \cap Z) \quad \text{and} \quad K = (LK)\cap Z ,\]
for all ideals $I$ of $L$ and ideals $K$ of $Z$. 

So let $P$ be a height-one prime ideal of $L$, and set $Q = P \cap Z$.
It follows from the preceding paragraph, or from \cite[3.14]{LetWan},
that $Q$ is a height-one prime ideal of $Z$.

By (\ref{center-theorem}), $Z$ is a commutative Laurent series ring,
and commutative Laurent series rings are UFDs (in the classical sense
of the term). Therefore, $Q = aZ$ for some element $a \in Z$. But then
\[ P = P(P\cap Z) = LQ = L(Za) = La.\]
Hence $P$ is principal, and so $L$ is a UFR.
\end{proof}

Combining the preceding proposition with the noncommutative analog of
Nagata's Lemma given by Dumas and Rigal \cite{DumRig}, and Launois,
Lenagan, and Rigal \cite{LauLenRig}, we obtain:

\begin{cor} If $S$ has a positive diagonal basis then $R$ is a UFR.
\end{cor}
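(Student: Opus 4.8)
The plan is to derive the corollary from Proposition \ref{L-ufr} by means of the noncommutative version of Nagata's Lemma due to Dumas and Rigal \cite{DumRig} and to Launois, Lenagan, and Rigal \cite{LauLenRig}. The setup is already in place: $L$ is obtained from $R$ by inverting the normal element $c = x_1\cdots x_n$, and each $x_i$ is normal in $R$. The first thing I would verify is that each $x_i$ is a \emph{prime element} of $R$, i.e.\ a normal element generating a completely prime ideal. Indeed, the quotient $R/\langle x_i\rangle$ is again a $q$-commutative power series ring, in the remaining indeterminates, and hence a Noetherian domain by (\ref{J-adic}); thus $\langle x_i\rangle$ is completely prime. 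By the principal ideal theorem for normal elements in a Noetherian domain, the nonzero prime $\langle x_i\rangle$ has height one. So each $x_i$ is a prime element, $L = R[c^{-1}]$, and $L$ is a UFR by Proposition \ref{L-ufr}.

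The cited form of Nagata's Lemma asserts (roughly) that if $T$ is a Noetherian domain, $p$ is a prime element of $T$, and the localization $T[p^{-1}]$ is a UFR, then $T$ is a UFR. Since $L$ is obtained by inverting the \emph{several} prime elements $x_1,\ldots,x_n$ rather than a single one, I would apply the lemma iteratively. Set $T_0 = R$ and $T_j = R[(x_1\cdots x_j)^{-1}]$, so that $T_n = L$. Each $T_j$ is a localization of the Noetherian domain $R$ at an Ore set (generated by normal elements), hence is itself a Noetherian domain, and $T_{j+1} = T_j[x_{j+1}^{-1}]$. Moreover $x_{j+1}$ remains a prime element of $T_j$: the quotient $T_j/\langle x_{j+1}\rangle$ is the localization of the domain $R/\langle x_{j+1}\rangle$ at the images of the normal elements $x_1,\ldots,x_j$, and so is again a domain. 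Starting from the UFR $L = T_n$ and descending one step at a time, Nagata's Lemma gives in turn that $T_{n-1},\ldots,T_0 = R$ are all UFRs.

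The one genuine subtlety, rather than the expected routine bookkeeping, is that one cannot shortcut the iteration by localizing at $c$ all at once: the element $c = x_1\cdots x_n$ is \emph{not} a prime element of $R$, since $R/\langle c\rangle$ has the nonzero zero-divisors $x_1,\ldots,x_n$ and is therefore not a domain. So the stepwise localization at the genuinely prime $x_i$ is forced, and the intermediate rings $T_j$ must be carried along. Beyond this, the main task is simply to match the precise hypotheses of the Nagata-type statements in \cite{DumRig} and \cite{LauLenRig}, in particular confirming at each stage that inverting $x_1,\ldots,x_j$ preserves the Noetherian domain property and that $\langle x_{j+1}\rangle$ stays height-one and completely prime after localization. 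Once these checks are recorded, the iterated application of the lemma yields that $R$ is a UFR, with no additional computation required.
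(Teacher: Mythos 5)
Your proof is correct, and at bottom it is the paper's argument: both consist of Proposition (\ref{L-ufr}) plus the noncommutative Nagata lemma of \cite{DumRig} and \cite{LauLenRig}. The difference is in how the lemma is invoked. The paper's proof is one sentence: it cites \cite[1.4]{LauLenRig} applied to the localization of $R$ at the \emph{single} normal element $c = x_1\cdots x_n$, with no primality verification; you instead invert the $x_i$ one at a time, checking at each stage that $x_{j+1}$ remains a prime element of $T_j$ (via $T_j/\langle x_{j+1}\rangle$ being a localization of the $q$-commutative power series domain $R/\langle x_{j+1}\rangle$), and your checks are accurate --- including the height-one claim, which follows from the noncommutative principal ideal theorem \cite[4.1.11]{McCRob}. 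Your caveat about the shortcut is well taken: for $n \geq 2$ the element $c$ is not prime, and $\langle c \rangle$ is not even a prime ideal, since $\langle x_1\rangle \langle x_2\cdots x_n\rangle \subseteq \langle c\rangle$ while neither factor lies in $\langle c\rangle$; and normality alone genuinely cannot suffice in a Nagata-type statement (commutatively, $k[x,y,z]/(xy-z^2)$ becomes a UFD upon inverting the normal element $x$, yet is not a UFD). So the paper's wording, taken literally, elides exactly the content you supply. That said, your iteration is not the only repair: one can equally note that the Ore set $X$ of Section 2 is generated by the prime elements $x_1,\ldots,x_n$ (the multiplicative-set form in which Nagata's lemma is classically stated), or that any height-one prime of $R$ containing $c$ must contain, hence equal, some $\langle x_i\rangle$, which is principal and completely prime --- after which a one-shot application is legitimate. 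Either way the needed input is your first-paragraph observation that each $x_i$ is a prime element of $R$, so your stepwise proof and the paper's citation coincide once the hypotheses of \cite[1.4]{LauLenRig} are matched; what your version buys is robustness against the exact formulation of the cited lemma, at the cost of carrying the intermediate rings $T_j$.
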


\begin{proof} This follows directly from (\ref{L-ufr}) and
  \cite[1.4]{LauLenRig}, noting that $L$ is obtained from $R$ via
  localization at the nonzero normal element $x_1\cdots x_n \in R$.
\end{proof} 

\begin{exam} In \cite{Cha}, Chatters uses the term \emph{unique
    factorization domain} to refer to unique factorization rings (in
  the above sense) for which the zero ideal and all height-one prime
  ideals are completely prime.

  To see that $q$-commutative power and Laurent series rings, at roots of
  unity, are generally not unique factorization domains even when they
  are unique factorization rings, consider the case when $n = 2$, when
  $k$ has characteristic $\ne 2$, and when $q = -1$. The center of $L$
  is then $k[[x_1^{\pm 2}, x_2^{\pm 2}]]$. It is not hard to
  check that $x_1^2 + x_2^2 \in Z$ generates a height-one prime ideal
  of $Z$, and so
\[P = L(x_1^2 + x_2^2)\]
is a height-one prime ideal of $L$, by (e.g.)
\cite[3.14]{LetWan}. But, $(x_1 + x_2) \notin P$, and 
\[(x_1 + x_2)^2 \; = \; x_1^2 + x_1x_2 + x_2x_1 + x_2^2 = x_1^2 +
x_2^2 \; \in \; P.\]
Therefore, $P$ is a height-one prime ideal of $L$ that is not
completely prime, and $L$ is not a unique factorization
domain. Furthermore, routine reasoning now shows that $x_1^2 + x_2^2$
generates a height-one prime ideal of $R$ that is not completely
prime, and so $R$ also is not a unique factorization domain.
\end{exam}


\end{document}